\def\meas{{\rm meas}\, }
\begin{document}
\newtheorem{theorem}{Theorem}[section]
\newtheorem{lemma}[theorem]{Lemma}
\theoremstyle{definition}
\newtheorem{definition}[theorem]{Definition}
\newtheorem{exercise}[theorem]{Exercise}
\newtheorem{example}[theorem]{Example}
\newtheorem{remark}[theorem]{Remark}
\title{The minimum angle condition for $d$-simplices}


\author{Sergey Korotov \and Jon Eivind Vatne}


\address{
Department of Computing, Mathematics and Physics, Western Norway University of Applied Sciences,
Post Box 7030, N-5020 Bergen, Norway, {\tt sergey.korotov@hvl.no, jon.eivind.vatne@hvl.no}
}

\maketitle

\begin{abstract}
  In this note we present a natural generalization of the minimum angle condition,
  commonly used in the finite element analysis for planar triangulations, to the case
  of simplicial meshes in any space dimension.
  The equivalence of this condition with some  other mesh regularity conditions
  is proved.
\end{abstract}

\section{Introduction}
\label{chapter-intro}

Various regularity properties (usually prescribed in terms of geometric characteristics)
are required from the meshes/partitions of space domains in order to guarantee suitable
interpolation and convergence results.  
In this paper we present and analyse a natural higher-dimensional
analogue of the so-called minimum angle condition, commonly imposed on planar triangulations
in the finite element analysis,
which roughly speaking forbids the mesh elements to shrink.

First, we recall the  basic results on the topic.
Let $\mathcal F = \{\mathcal T_h\}_{h \to 0}$ be a family of conforming triangulations
$\mathcal T_h$ of a bounded polygonal domain.
In 1968
the following {\it minimum angle condition} was proposed \cite{Zla,Zen}:
there exists a constant $\alpha_0 > 0$
such that for any triangulation $\mathcal T_h \in \mathcal F$ and any triangle $T \in \mathcal T_h$
the bound
\begin{equation}
  \alpha_T \ge \alpha_0,
  \label{eq-lower-bound}
\end{equation}
holds, where $\alpha_T$ is the minimum angle of $T$.

Later, many other (geometric) conditions on triangulations equivalent to (\ref{eq-lower-bound})
have been proposed, see e.g. \cite{BraKorKri-2008,BraKorKri-2009,BraKorKri-Zlamal}
and references therein. Moreover, some higher-dimensional analogues of  (\ref{eq-lower-bound})
were proposed \cite{BraKorKri-Zlamal,Cia}. Under all these conditions  various interpolation
and finite element convergence estimates
are usually derived, we refer to \cite{Cia} as one of the basic sources in this respect.
In what follows, we will mainly concentrate on mesh regularity
conditions based on estimates of dihedral angles.

Thus, in 2008,
in \cite{BraKorKri-2008}, condition (\ref{eq-lower-bound})
was generalized 
to tetrahedral elements as follows: there exists a constant $\alpha_0 > 0$
such that for any conforming tetrahedralization $\mathcal T_h \in \mathcal F$ and any
tetrahedron $T \in \mathcal T_h$ one has 
\begin{equation}
  \alpha_{\rm D}^T \ge \alpha_0 \quad \text{and}   \quad   \alpha_{\rm F}^T \ge \alpha_0, 
  \label{eq-lower-bound-3D}
\end{equation}
where $\alpha_{\rm D}^T$ is the minimum of values of dihedral angles between faces of $T$ and
$\alpha_{\rm F}^T$ is the minimum angle in all four triangular faces of $T$. This condition was
also proved to be equivalent with many other regularity conditions on tetrahedral meshes, see
\cite{BraKorKri-2008}.

However, the next step of proposing the higher-dimensional condition in terms of
dihedral angles has not been done so far. In this paper we address this issue to.
Namely, we recall the minimum angle-type condition based on the concept of
$d$-sine of F. Eriksson (see \cite{Eri}) as presented   in the paper \cite{BraKorKri-Zlamal},
then propose a natural higher-dimesional analogue of (\ref{eq-lower-bound}) and (\ref{eq-lower-bound-3D}),
and prove the equivalence of these two regularity conditions in any dimension.

\section{Minimum angle conditions in higher dimensions}
\label{chapter-denotation}

A $d$-simplex $S$ in ${\bf R}^d, d \in \{1, 2, 3, \dots \}$,
is the convex hull of $d+1$ vertices $A_0, A_1, \dots, A_{d}$ that do not
belong to the same $(d-1)$-dimensional hyperplane, i.e.,
$$
S =  {\rm conv}  \{A_0, A_1, \dots , A_{d}\}.
$$
Further,
$$
F_i =  {\rm conv} \{A_0, \dots , A_{i-1}, A_{i+1}, \dots ,A_{d}\}
$$ 
is the $(d-1)$-facet of $S$ opposite to the vertex $A_i$ for $i \in \{0, \dots, d\}$.

For $d \ge 2$ the dihedral angle $\beta_{ij}$ between two $(d-1)$-facets $F_i$ and $F_j$ of $S$
is defined by means of the inner product of their outward unit normals $n_i$ and $n_j$
\begin{equation}
\cos \beta_{ij} = - n_i  \cdot n_j.
\label{dihedral-angle}
\end{equation}

In 1978, Eriksson introduced a generalization of the sine function to an arbitrary
$d$-dimensional spatial angle, see \cite[p.~74]{Eri}.

\begin{definition}
  Let $\hat A_i$ be the angle at the vertex $A_i$ of the simplex $S$. Then  $d$-sine of the angle
  $\hat A_i$ for $d>1$ is given by
\begin{equation}
\sin_d (\hat A_{i}  | A_0 A_1 \dots A_{d}) 
= \frac{ d^{d-1} \, (\meas_d S)^{d-1} }{(d-1) ! \, {\bf \Pi}_{j = 0, j \neq i}^{d} \meas_{d-1} F_j }. 
\label{d-sine}
\end{equation}
\end{definition}

\begin{remark}
  The $d$-sine is really a generalization of the classical sine function. In order to
  see that, set $d=2$ and consider an arbitrary triangle $A_{0} A_{1} A_{2}$. Let $\hat A_{0}$
be its angle at the vertex $A_{0}$. Then, obviously, 
\begin{equation}
\meas_2 (A_0 A_1 A_2) = {1\over 2} |A_0 A_1| |A_0 A_2| \sin \hat A_0,
\label{2d-area-triangle}
\end{equation}
which implies
\begin{equation}
  \sin \hat A_0 = \sin_2 (\hat A_0 | A_0 A_1 A_2). 
\label{eq-compare}
\end{equation}
\end{remark}

\begin{definition} 
  A family $\mathcal F = \{\mathcal T_h \}_{h \to 0}$  of partitions of a polytope
   $\overline \Omega \subset {\bf R}^d$ into 
$d$-simplices is said to satisfy the {\it generalized minimum angle condition} if
there exists $C > 0$ such that for any $\mathcal T_h \in \mathcal F$ and any
$S = {\rm conv} \{ A_0, \dots , A_{d} \} \in \mathcal T_h$ 
one has 
\begin{equation}
\forall \ i \in \{0, 1, \dots , d \}
\qquad
\sin_d ( \hat A_i  | A_{0} A_{1} \dots A_{d})  \ge C > 0.
\label{eq_dsine_bounded}
\end{equation}
\label{conditions-4-any-dimension}
\end{definition} 

Now we present a natural generalization of conditions  (\ref{eq-lower-bound}) and
(\ref{eq-lower-bound-3D}) for simplicial meshes in any dimension.

\begin{definition}
    A family $\mathcal F  = \{\mathcal T_h \}_{h \to 0}$ of partitions of a polytope
  $\overline \Omega \subset {\bf R}^d$ into 
$d$-simplices is said to satisfy the {\it $d$-dimensional  minimum angle condition} if
    there exists a constant $\alpha_0 > 0$ such that for $\mathcal T_h \in \mathcal F$
    and any simplex $S \in \mathcal T_h$  and any subsimplex $S'\subseteq S$ with vertex
    set contained in the vertex set of $S$, the minimum dihedral angle in $S'$ is not less
    than $\alpha_0$. 
\label{def-minimum-general}
\end{definition}

\begin{remark}
  Obviously, the above condition coincides with (\ref{eq-lower-bound}) for the case
  $d=2$, and with (\ref{eq-lower-bound-3D}) -- for $d=3$, and by nature it has a form
  of limiting all the dihedral angles from below.
  In case $S$ is a triangle, the dihedral angles are the ordinary angles.
  Definitions~\ref{conditions-4-any-dimension} and \ref{def-minimum-general} thus
  express exactly the same condition in dimension two.
  \label{remark-low-dimension}
\end{remark}

\begin{lemma}
For a $d$-simplex we observe that
 \begin{equation}
\sin_d (\hat A_{i}  | A_0 A_1 \dots A_{d}) 
= \sin_{d-1} (\hat A_{i}  | A_0 A_1 \dots A_{d-1})\prod_{j=0, j\neq i}^{d-1}\sin(\beta_j), 
\label{d-sine-product}
\end{equation}
 where $\beta_j$ is the dihedral angle between the facet omitting $A_j$ and the facet
 omitting $A_d$.
\label{lemma-sines-tetrahedron-ABCD}
\end{lemma}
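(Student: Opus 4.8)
The plan is to unfold both sides of (\ref{d-sine-product}) by the defining formula (\ref{d-sine}) and reduce the identity to an elementary relation among the $d$-volume of $S$, the $(d-1)$-volumes of its facets, and the $(d-2)$-volumes of their common ridges. Note first that for $\sin_{d-1}(\hat A_i\mid A_0\cdots A_{d-1})$ to be meaningful we must have $i\in\{0,\dots,d-1\}$, so that $A_i$ is a vertex of the base facet $F_d=\mathrm{conv}\{A_0,\dots,A_{d-1}\}$; this quantity is exactly the $(d-1)$-sine of $F_d$ regarded as a $(d-1)$-simplex in its own hyperplane. Its $(d-2)$-dimensional facets are the ridges $G_j:=F_j\cap F_d$, each being the facet of $F_d$ opposite $A_j$ for $j\in\{0,\dots,d-1\}$. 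Writing $V=\meas_d S$ and $B=\meas_{d-1}F_d$, I would observe that the product in (\ref{d-sine}) for the left-hand side runs over $\{0,\dots,d\}\setminus\{i\}$ and hence carries the extra factor $B$ that is absent on the right; after this observation the lemma is a bookkeeping identity in $V$, $\meas_{d-1}F_j$, $\meas_{d-2}G_j$ and the angles $\beta_j$.

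The geometric heart of the argument is the altitude identity
\[
\sin\beta_j=\frac{h}{H_j},
\]
where $h$ is the altitude of $S$ dropped from $A_d$ onto the hyperplane of $F_d$, and $H_j$ is the altitude of the facet $F_j$ (seen as a $(d-1)$-simplex with apex $A_d$ and base $G_j$) dropped from $A_d$ onto $\mathrm{aff}\,G_j$. To establish it I would let $P$ be the foot of $A_d$ on the hyperplane of $F_d$ and $Q_j$ the foot of $A_d$ on $\mathrm{aff}\,G_j$. The theorem of three perpendiculars then gives that $P Q_j\perp\mathrm{aff}\,G_j$, so the triangle $A_d P Q_j$ lies in the plane normal to the ridge $G_j$ and has a right angle at $P$, with $|A_dP|=h$ and $|A_dQ_j|=H_j$. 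Since the dihedral angle $\beta_j$ along $G_j$, defined through the outward normals as in (\ref{dihedral-angle}), equals $\angle A_dQ_jP$ or its supplement, one obtains $\sin\beta_j=|A_dP|/|A_dQ_j|=h/H_j$ in either case.

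It remains to translate altitudes into volumes via the pyramid formula, $d\,V=h\,B$ and $(d-1)\,\meas_{d-1}F_j=H_j\,\meas_{d-2}G_j$, and to substitute into the product $\prod_{j\neq i}\sin\beta_j$, which has exactly $d-1$ factors. The ridge volumes $\meas_{d-2}G_j$ then cancel precisely against those occurring in the denominator of $\sin_{d-1}(\hat A_i\mid A_0\cdots A_{d-1})$; the factorials collapse through $(d-1)\cdot(d-2)!=(d-1)!$, the powers of $d-1$ cancel, and the replacement $h^{d-1}=(dV/B)^{d-1}$ reproduces both the factor $d^{d-1}V^{d-1}$ and the missing $B^{-1}=(\meas_{d-1}F_d)^{-1}$, which is exactly the left-hand side of (\ref{d-sine-product}).

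I expect the genuinely delicate step to be the altitude–dihedral relation, not the final algebra. One must identify $G_j=F_j\cap F_d$ correctly as the common ridge, justify the right angle at $P$ in arbitrary dimension through the three-perpendiculars theorem, and allow for an obtuse $\beta_j$ via $\sin(\pi-x)=\sin x$. Once this relation is secured, the remainder is a routine collapse of exponents and factorials.
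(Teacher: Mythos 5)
Your argument is correct, and it is worth noting that the paper itself offers no proof of this lemma at all: it simply refers the reader to Eriksson \cite[p.~74--76]{Eri}. What you have produced is therefore a self-contained verification that the paper omits. The two pillars of your argument both hold up. First, the altitude--dihedral relation $\sin\beta_j=h/H_j$ is exactly right: with $P$ the foot of $A_d$ on $\mathrm{aff}\,F_d$ and $Q_j$ the foot on $\mathrm{aff}\,G_j$, the vector $\vec{PQ_j}=\vec{PA_d}+\vec{A_dQ_j}$ is orthogonal to the direction space of $G_j$ because both summands are, the angle at $P$ is right since $Q_j\in\mathrm{aff}\,F_d$, and the angle at $Q_j$ is $\beta_j$ or its supplement, so the sine is unambiguous. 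Second, the bookkeeping closes: from $\sin\beta_j=\dfrac{d\,V\,\meas_{d-2}G_j}{(d-1)\,B\,\meas_{d-1}F_j}$ the product over the $d-1$ indices $j\in\{0,\dots,d-1\}\setminus\{i\}$, multiplied by $\sin_{d-1}(\hat A_i\mid A_0\cdots A_{d-1})=\dfrac{(d-1)^{d-2}B^{d-2}}{(d-2)!\,\prod_{j\neq i}\meas_{d-2}G_j}$, collapses to $\dfrac{d^{d-1}V^{d-1}}{(d-1)!\,B\,\prod_{j=0,\,j\neq i}^{d-1}\meas_{d-1}F_j}$, and absorbing $B=\meas_{d-1}F_d$ into the product gives precisely \eqref{d-sine}. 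Your observation that the identity only makes sense for $i\in\{0,\dots,d-1\}$ is also a point the paper's statement leaves implicit. Two minor remarks: the right-hand side involves $\sin_{d-1}$, which the paper defines only for dimension $>1$, so your induction-free computation should be read as applying for $d\geq 3$ (the lemma is vacuous as stated for $d=2$); and it would be worth one sentence confirming that $H_j>0$, i.e.\ that $A_d\notin\mathrm{aff}\,G_j$, which follows from nondegeneracy of $S$.
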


For the proof see \cite[p. 74--76]{Eri}.

\begin{theorem}
  The $d$-dimensional minimum angle condition presented in  Definition~\ref{def-minimum-general} and
  the generalized minimum angle condition from Definition~\ref{conditions-4-any-dimension} are equivalent.
\end{theorem}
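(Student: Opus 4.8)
The plan is to prove the two implications separately, using the product formula in Lemma~\ref{lemma-sines-tetrahedron-ABCD} as the bridge between the two quantities. The essential observation is that the $d$-sine at a vertex decomposes, via repeated application of (\ref{d-sine-product}), into a product of ordinary sines of dihedral angles of $S$ and of its lower-dimensional subsimplices. Since the dihedral angles involved all lie in $(0,\pi)$, their sines are bounded below by a positive constant precisely when the angles are bounded away from $0$ and $\pi$. Thus a uniform lower bound on \emph{all} dihedral angles across all subsimplices controls each factor in the product, and hence the $d$-sine itself, while conversely a lower bound on the $d$-sine constrains the factors. The main technical point is to handle the fact that the dihedral angles must be kept away from \emph{both} endpoints $0$ and $\pi$, and that the $d$-dimensional minimum angle condition as stated only explicitly bounds angles from below.

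For the direction Definition~\ref{def-minimum-general} $\Rightarrow$ Definition~\ref{conditions-4-any-dimension}, I would argue that a uniform lower bound $\alpha_0$ on every dihedral angle in every subsimplex in fact also yields a uniform \emph{upper} bound strictly below $\pi$: in a fixed simplex the dihedral angles at a single facet (or, more usefully, the angles meeting along a common $(d-2)$-face) satisfy relations forcing that if all are bounded below then none can approach $\pi$, since they cannot all simultaneously be small while one is near $\pi$. Granting this, every factor $\sin(\beta_j)$ appearing in the iterated product formula is bounded below by $\sin(\alpha_0)>0$, and because the recursion in (\ref{d-sine-product}) terminates after finitely many (at most $d-1$) steps at the base case $\sin_2 = \sin$ of an ordinary angle in a subsimplex --- which is again a dihedral angle of a two-dimensional subsimplex and hence $\ge \alpha_0$ --- the product is bounded below by $(\sin\alpha_0)^{N}$ for an explicit exponent $N$ depending only on $d$. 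This gives the constant $C$ in (\ref{eq_dsine_bounded}).

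For the converse direction Definition~\ref{conditions-4-any-dimension} $\Rightarrow$ Definition~\ref{def-minimum-general}, I would proceed by induction on the dimension, or equivalently by contradiction. Suppose some dihedral angle of some subsimplex $S'$ approaches $0$ or $\pi$. I would first note that the $d$-sine condition (\ref{eq_dsine_bounded}) is hereditary in the sense that a lower bound on the $d$-sines of $S$ forces, through the product formula, lower bounds on the lower-dimensional $d'$-sines $\sin_{d'}(\hat A_i \mid \cdots)$ of its subsimplices: each factor $\sin(\beta_j)\le 1$, so $\sin_{d-1}\ge \sin_d$, and iterating shows every subsimplex also satisfies a generalized minimum angle condition with the same constant $C$. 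It therefore suffices to treat the top-level dihedral angles $\beta_j$ of $S$ itself. Using the product formula, each $\sin(\beta_j)$ appears as a factor of a ratio of two $d$-sines (one in dimension $d$ and one in dimension $d-1$), so a lower bound on the numerator combined with the trivial upper bound $\sin_{d-1}\le 1$ bounds $\sin(\beta_j)$ from below, whence $\beta_j$ is bounded away from both $0$ and $\pi$.

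The step I expect to be the main obstacle is establishing, in the first direction, that bounding all subsimplex dihedral angles from below genuinely prevents any of them from approaching $\pi$ --- in other words, converting the one-sided hypothesis of Definition~\ref{def-minimum-general} into the two-sided control needed to bound $\sin(\beta_j)$ away from zero. One must verify that the stated condition, which literally only says ``not less than $\alpha_0$,'' is strong enough because the relevant angles are constrained by the geometry of the shared lower-dimensional faces; the cleanest route is probably to exhibit, for each dihedral angle near $\pi$ in $S$, a complementary or supplementary dihedral angle in a suitable subsimplex that would then be forced near $0$, contradicting the hypothesis. Making this complementarity precise in arbitrary dimension, rather than relying on the planar or three-dimensional picture, is the delicate part of the argument.
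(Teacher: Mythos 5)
Your skeleton coincides with the paper's: induct on $d$, use the product formula \eqref{d-sine-product} as the bridge, observe that every factor on the right is at most $1$ so a lower bound on $\sin_d$ propagates to each factor (giving Definition~\ref{conditions-4-any-dimension} $\Rightarrow$ Definition~\ref{def-minimum-general} via $\beta_j\geq\arcsin C$ together with the induction hypothesis applied to the facets, which inherit the bound since $\sin_{d-1}\geq\sin_d\geq C$), and reduce the converse to bounding each $\sin\beta_j$ from below, i.e.\ to keeping the dihedral angles away from $\pi$ as well as from $0$. All of that is correct, and you rightly identify the two-sided control of $\beta_j$ as the crux. But you do not prove that crux: your first direction begins ``Granting this,'' and your final paragraph concedes the step is open. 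That is a genuine gap, not a routine verification.

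Moreover, the mechanism you propose for closing it --- exhibiting, for a dihedral angle of $S$ near $\pi$, a ``complementary or supplementary'' dihedral angle of some subsimplex forced near $0$ --- does not appear to be realizable: the dihedral angles of a facet are not supplements of dihedral angles of $S$, and your heuristic ``they cannot all be small while one is near $\pi$'' points in the wrong direction. The paper's argument is a degeneration argument. If a sequence of simplices (satisfying Definition~\ref{def-minimum-general}) had a dihedral angle tending to $\pi$, the simplices would flatten onto a hyperplane $H$. The facets inherit the minimum angle condition, so by the induction hypothesis their $(d-1)$-sines are bounded below and they do \emph{not} degenerate; hence each facet spans $H$ in the limit and all outward facet normals tend to normals of $H$. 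Two normals tending to \emph{opposite} normals of $H$ would force the corresponding dihedral angle to $0$ (by \eqref{dihedral-angle}), contradicting the lower bound $\alpha_0$; therefore all normals align and \emph{every} dihedral angle tends to $\pi$, which contradicts the upper bound on the sum of the dihedral angles of a $d$-simplex \cite{Gad-1956}. Note that this step genuinely uses the induction hypothesis to keep the facets nondegenerate, so the one-sided hypothesis of Definition~\ref{def-minimum-general} really does suffice --- but only via this global argument, not via a local supplementary-angle identity.
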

%
\begin{proof}
The proof is by induction on the dimension $d$.
For $d=2$ the two conditions are in fact the same, as pointed out in Remark~\ref{remark-low-dimension}.
Assume by induction that the theorem is true for all dimensions smaller than $d$.\\

Assume first that the condition in Definition~\ref{conditions-4-any-dimension} is satisfied, so that there exists a constant $C>0$ as in \eqref{eq_dsine_bounded}.
Using \eqref{d-sine-product}, we get that
 \begin{equation}
C\leq \sin_d (\hat A_{i}  | A_0 A_1 \dots A_{d}) 
= \sin_{d-1} (\hat A_{i}  | A_0 A_1 \dots A_{d-1})\prod_{j=0, j\neq i}^{d-1}\sin(\beta_j) .
\end{equation}
All the factors on the right are bounded from above by $1$, and therefore each of them must be larger than or equal to $C$.
In this formula, all the dihedral angles involve the face opposite the vertex $A_d$.
However, by reordering the vertices, we get that any dihedral angle $\beta$ satisfies $\sin\beta\geq C$.
Since $\sin_{d-1} (\hat A_{i}  | A_0 A_1 \dots A_{d-1})\geq C$ we can use the induction hypothesis to conclude that all dihedral angles in simplices of strictly smaller dimension than $d$ are not smaller than a constant $\alpha_0'$, say.
We can then let $\alpha_0=\min\{\alpha_0',\arcsin C\}$ and see that the condition in Definition~\ref{def-minimum-general} is satisfied.\\

Assume next that the condition in Definition~\ref{def-minimum-general} holds, so that $\alpha_0>0$ is a lower bound for all the dihedral angles considered in any dimension.
Since any subsimplex of a facet is also a subsimplex of the whole simplex, we get by induction that there is a constant $C'$ such that
\begin{equation}
\sin_{d-1} (\hat A_{i}  | A_0 A_1 \dots A_{d-1})\geq C'.
\end{equation}
By the product formula \eqref{d-sine-product} we then get
 \begin{equation*}
\sin_d (\hat A_{i}  | A_0 A_1 \dots A_{d}) 
= \sin_{d-1} (\hat A_{i}  | A_0 A_1 \dots A_{d-1})\prod_{j=0, j\neq i}^{d-1}\sin(\beta_j)
 \geq C'\prod_{j=0, j\neq i}^{d-1}\sin(\beta_j).
\end{equation*}
By the minimal angle condition, we know that $\beta_j>\alpha_0$.
We will show that there also exists an upper bound $\gamma_0<\pi$ such that $\beta_j\leq \gamma_0$.
Then $\sin\beta_j\geq \min(\sin \alpha_0,\sin\gamma_0)$ and
 \begin{equation*}
\sin_d (\hat A_{i}  | A_0 A_1 \dots A_{d}) 
 \geq C'\prod_{j=0, j\neq i}^{d-1}\sin(\beta_j)\geq C'\big(\min(\sin \alpha_0,\sin\gamma_0)\big)^{d-1}.
\end{equation*}
We then get the theorem by setting
\begin{equation}
C=C'\big(\min(\sin \alpha_0,\sin\gamma_0)\big)^{d-1}.
\end{equation}
Assume to get a contradiction that there is no upper bound for the dihedral angles smaller than $\pi$.
There is then a sequence $S_h$ of simplices with a dihedral angle tending to $\pi$.
In the limit, the sequence must degenerate into a set spanning a proper subspace $H\subset\mathbb{R}^d$.
By the induction hypothesis, any subsimplex of dimension $k$ satisfy that all $\sin_{k}$ at any vertex is bounded from below.
It follows that any subsimplex will not give a degenerate limit.
In particular, the limit of any facet will span a subspace of dimension $d-1$, namely $H$.
Therefore the outward normals of the facets will tend to normals of $H$.
Now the dihedral angle between two facets cannot tend to zero by the minimal angle condition.
Therefore all the outward normals must tend to normals of $H$ pointing in the same direction,
which again implies that {\em all} the dihedral angles tend to $\pi$.
This is clearly false (e.g. it violates the upper bound for the sum of the dihedral angles,
see e.g. \cite{Gad-1956}), and we have established our contradiction.
\end{proof}

\section*{Final remarks}  

\begin{itemize}

  \item
Generation of meshes under satisfying the minimum angle conditions is presented and discussed
e.g. in \cite{HanKorKri-2010,HanKorKri-2014}.

\medskip

  \item
The proposed $d$-dimensional  minimum angle condition is also equivalent to the classical
inscribed ball condition of Ciarlet \cite{Cia} due to the equivalence results of \cite{BraKorKri-2009}.
(Several other less known equivalent regularity conditions can be found in the same reference.)

\end{itemize}

%

%
%
%

\ifx\undefined\bysame
\newcommand{\bysame}{\leavevmode\hbox to3em{\hrulefill}\,}
\fi

\end{document}